\theoremstyle{plain}
\newtheorem{theorem}{Theorem}[section]
\newtheorem{example}[theorem]{Example}
\newtheorem{lemma}[theorem]{Lemma}
\newtheorem{remark}[theorem]{Remark}
\numberwithin{equation}{section}
\DeclareMathAlphabet\scr{U}{scr}{m}{n}
\SetMathAlphabet\scr{bold}{U}{scr}{b}{n}
  \DeclareFontFamily{U}{scr}{\skewchar\font'177}%
  \DeclareFontShape{U}{scr}{m}{n}{<-6>rsfs5<6-8>rsfs7<8->rsfs10}{}%
  \DeclareFontShape{U}{scr}{b}{n}{<-6>rsfs5<6-8>rsfs7<8->rsfs10}{}%
\renewcommand{\epsilon}{\varepsilon}
\renewcommand{\theta}{\vartheta}
\renewcommand{\rho}{\varrho}
\begin{document}
\title[]{How singular are moment generating functions?}
\author[E.~Mayerhofer]{Eberhard Mayerhofer}
\address{Vienna Institute of Finance, University of Vienna and Vienna University of Economics
and Business Administration, Heiligenst\"adterstrasse 46-48, 1190
Vienna, Austria} \email{eberhard.mayerhofer@vif.ac.at}
\begin{abstract}
This short note concerns the possible singular behaviour of moment
generating functions of finite measures at the boundary of their
domain of existence. We look closer at Example 7.3 in O.
Barndorff-Nielsen's book {\it Information and Exponential Families
in Statistical Theory (1978)} and elaborate on the type of exhibited
singularity. Finally, another regularity problem is discussed and it
is solved through tensorizing two Barndorff- Nielsen's
distributions.
\end{abstract}
\subjclass[2000]{} \keywords{analytic functions, moment generating
function, multivariate distributions} \maketitle
\section{Introduction}
Let $\mu$ be a finite positive measure on $\mathbb R^d$ $(d\geq 1)$,
and denote by
\[
G:\quad \mathbb R^d\rightarrow\mathbb R_+,\quad G(u)=\int_{\mathbb
R^d}e^{\langle u, \xi\rangle}\mu(d\xi)
\]
its moment generating function. (Here $\langle \cdot,\cdot\rangle$
denotes the standard Euclidean scalar product on $\mathbb R^d$.)
That is defined on the domain
\[
V:=\{u\in\mathbb R^d\mid G(u)<\infty\}.
\]
It is well known that $V$ is convex, and $u\mapsto G(u)$ is convex
thereon. Also, $G$ is analytic on the interior $V^\circ$ of $V$.

We  only talk about measures for which $V^\circ\neq\emptyset$.
Accordingly, we may assume without loss of generality that $0\in
V^\circ$ (use exponential tilting). Let us first have a look at the
behaviour of $G$ along half rays through the origin. Fix $u\in
\mathbb R^d$ and define
\[
\theta^*:=\sup\{\theta>0\mid G(\theta u)<\infty\}.
\]
Clearly we have
\[
\theta^*=\sup\{\theta>0\mid \theta u\in V\}=\sup\{\theta>0\mid
\theta u\in V^\circ\}\in(0,\infty].
\]
\begin{remark}\rm\label{remark1.1}
Either $\theta^*=\infty$ (not exciting), or $\theta^*<\infty$,
in which case the analytic function $G$ must exhibit a singularity
at $\theta^* u\in\partial V^\circ$. Along the ray $[0,\theta^* u]$,
two situations may occur:
\begin{enumerate}
\item \label{nonexotic} Either $\lim_{\theta\uparrow\theta^*}g(\theta u)=+\infty$, in
which case $\theta^* u\notin V$.
\item Or $\lim_{\theta\uparrow\theta^*}g(\theta u)=g(\theta^* u)<\infty$ where the equality follows from
Lebesgue's monotone convergence theorem. By definition,
$\theta^*u\in V$.
\end{enumerate}
\end{remark}
In the one-dimensional situation ($d=1$) it is clear that no other
limits then those along straight lines can be considered, hence all
possible singularities are of the above kind. However, the situation
is different in $\mathbb R^d$, $d\geq 2$. In the following section
we shall formulate a related non-trivial problem, and we recall a
partial answer from Barndorff-Nielsen's book \cite{barndorff} in
section \ref{sec partial answer} which involves a bivariate
distribution; that we then elaborate in greater detail. The final
section \ref{a new problem} poses a problem about singularities of
moment generating functions along the boundary of their existence
domains. That is solved by tensorizing Barndorff- Nielsen's
distribution with itself--hence it involves a four dimensional
distribution.\subsection{Problem}

Does there exist a probability measure $\mu$ on $\mathbb R^d$ whose
moment generating function $G$ has the following properties:
\begin{enumerate}
\item There exists a continuous curve $c: [0, 1]\to V, t\mapsto c(t)$ such
that
\item $G(c(1))\neq\lim_{t\uparrow 1} G(c(t))<\infty$, or, more
generally: There exists a sequence $t_k\uparrow 1$ such that the
sequence $(G(c(t_k)))_{k=1}^\infty$ has an accumulation point $p$ in
$\mathbb R_+ \cup\{\infty\}$ different from $G(c(1))$.
\end{enumerate}
Since moment generating functions are continuous along rays through
the origin, a solution $c$ of this problem necessarily needs to be
crooked; a solution is provided by Lemma \ref{lem answer} below.
\subsection{Motivational background}
In 2008, I had the pleasure to jointly elaborate with Damir
Filipovi\'c on a moment problem involving affine processes. It
concerned the question, for which real $u\in\mathbb R^d$ a
stochastic process $X$ on $D=\mathbb R_+^m\times\mathbb R^n$
satisfies the affine property
\[
\mathbb E^x[e^{\langle u, X_t\rangle}\mid
X_0=x]=e^{\phi(t,u)+\langle \psi(t,u),x\rangle}, \quad x\in D.
\]
We managed to completely characterize the validity of this affine
transform formula for affine diffusion processes\footnote{Note that
a-priori it is only clear that affine processes have a
characteristic function which is exponentially affine in the
state-variable. This is the key defining property of affine
processes. }: Either side is well defined, if the other is. In any
case, the expoenents $\phi,\psi$ solve a $(d+1)$ dimensional system
of Riccati differential equations (with initial values ($(0,u)$))
with blow up strictly beyond $t$\footnote{For Fourier-pricing
applications/implications, see \cite[Theorem 3.3]{ADPTA}.}.

A key finding which led to our characterization was that for any
$u\in\mathbb R^d$
\[
\mathbb E^x[e^{\langle \theta u, X_t\rangle}\mid X_0=x]\uparrow
\infty
\]
when
\[
\theta\uparrow\theta^*:=\sup\{\theta>0\mid \mathbb E^x[e^{\langle
\theta u, X_t\rangle}\mid X_0=x]<\infty\}.
\]
That is, the moment generating function of an affine diffusion does
not exhibit any exotic singularities, but only the one described in
Remark \ref{remark1.1} \ref{nonexotic}. If $D=\mathbb R_+$, then any
affine diffusion equals a square Bessel processes $X$, and therefore
for each $t$, $X_t$ is chi-square distributed (when appropriately
scaled), and in that case it is even obvious that the moment
generating function of the transition law of $X_t$ has a blow up
singularity at the boundary of its domain\footnote{Strictly
speaking, one needs to exclude deterministic motion, that is, one
should require that the diffusion coefficient of $X$ does not
vanish.}.

The problems of this paper have arisen naturally in the context of
our work\footnote{The question was posed to me by Damir Filipovi\'c
in early 2009.}. However, it turned out that a classification of
singularities of the moment generating function of processes which
exhibit jumps was not helpful for providing a characterization of
the affine property beyond the pure diffusion case. Also, such a
characterization seems to be not feasible (this is a subject believe
of the author). Ongoing work with Martin Keller-Ressel (Berlin)
provides a deeper understanding of the existence and non-existence
issues of exponential moments of affine jump-diffusions.
\section{A bivariate distribution}\label{sec partial answer}
In the following the notation for points in $\mathbb R^2$ with
coordinates $x,y$, namely $(x,y)$, should not be confused with open
intervals of the form $(a,b)=\{x\in\mathbb R\mid a<x<b\}$; the
reader will distinguish the notation easy from the context.

Barndorff-Nielsen used the following bivariate distribution $\mu$ on
$\mathbb R^2$ \cite[Example 7.3]{barndorff} defined by its density
$f(\xi):=\mu(d\xi)/d\xi$ (henceforth called Barndorff-Nielsen's
example or -function):
\begin{example}\label{barn ex}\rm
\[
f(\xi)=\frac{1}{2\sqrt{\pi}}(1+\xi_1^2)^{-3/2} e^{-\xi_1^2-\xi_2
^2/[4(1+\xi_1)^2]}.
\]
According to \cite[page 105]{barndorff}, for any
$u=(u_1,u_2))\in\mathbb R^2$ one has
\[
\int_{\mathbb R}e^{\langle
u,\xi\rangle}f(\xi)d\xi_2=(1+\xi_1^2)^{-1}e^{u_2^2+u_1\xi_1-(1-u_2^2)\xi_1^2}
\]
whence
\[
V=(\mathbb R\times (-1,1)) \cup \{(0,1), (0,-1)\}.
\]
That is, the domain of the moment generating functions consists of
an infinite strip of width $2$ parallel to the $\xi_1$-axis, and of
two isolated points on the $\xi_2$-axis.

For $u_2\in (-1,1)$ one has
\begin{align*}
G(u)&=I_1\times I_2\times I_3,\quad\textrm{where}\\ I_1(u)&=
e^{u_2^2+4u_1^2(1-u_2^2)},\\
I_2(u)&= e^{\frac{u_1}{4(1-u_2^2)}},\quad\textrm{and}\\
I_3(u)&=\int_{\mathbb R}(1+\xi_1^2)^{-1}\exp(-(1-u_2^2)\xi_1)d\xi_1.
\end{align*}
\end{example}
Barndorff-Nielsen shows that there exists a curve $c:
[0,t]\rightarrow V$ such that $c(0)=(0,0), c(1)=(0,1)$ but
\[
e\pi=G(c(1))\neq\lim_{t\uparrow 1} G(c(t))=\infty
\]
Here $e$ denotes the Euler number $e=2.71828\dots$. But much more
can be said about $G$. In a moment we reveal the following facts
\begin{lemma}\label{lem answer}
Let $\mathcal C$ be the set of continuous curves $c:
[0,t]\rightarrow V$ such that  $c(1)=(0,1)$. Then $G$ exhibits the
following (interrelated) properties:
\begin{enumerate}
\item \label{item1} There exists $c\in\mathcal C$ such that for each $p\in\mathbb [e\pi,\infty]$
there exists a sequence $(t_k)_{k=1}^\infty$ for which
\[
\lim_{k\rightarrow\infty} G(c(t_k))=p
\]
\item \label{item2} For each $p\in\mathbb [e\pi,\infty]$ there exists $c\in\mathcal C$
such that
\[
\lim_{t\uparrow 1} G(c(t))=p
\]
\item \label{item3} Let $(g^k)_{k=1}^\infty$ be any sequence in $[e\pi,\infty]$. Then there exists some $c\in\mathcal C$ and a sequence
$t_j\uparrow t$ such that $(g^k)_{k=1}^\infty$ equals the
accumulation points of the sequence $(G(c_{t_j}))_{j=1}^\infty$.
\end{enumerate}
\end{lemma}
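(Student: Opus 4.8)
The starting point is the explicit formula $G(u_1,u_2)=I_1(u)I_2(u)I_3(u)$ valid for $u_2\in(-1,1)$. The plan is to understand the behaviour of $G$ as $u\to(0,1)$ along carefully engineered paths. First I would study the three factors separately near the boundary $u_2\to 1$. The factor $I_3(u)$ depends only on the combination $1-u_2^2$, and as $u_2\to 1$ with $u_1\to 0$ this quantity $\to 0^+$; since $I_3$ is (up to a harmless reparametrisation) the integral $\int_{\mathbb R}(1+\xi^2)^{-1}e^{-(1-u_2^2)\xi}\,d\xi_1$, one checks that $I_3\to\pi$ (the value at the endpoint), so $I_3$ contributes a bounded, continuous, strictly positive factor converging to $\pi$. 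Similarly $I_1(u)=e^{u_2^2+4u_1^2(1-u_2^2)}\to e$ as $(u_1,u_2)\to(0,1)$, contributing the Euler number $e$. The entire ``exotic'' behaviour is therefore concentrated in $I_2(u)=\exp\bigl(\tfrac{u_1}{4(1-u_2^2)}\bigr)$: its exponent is $\tfrac{u_1}{4(1-u_2)(1+u_2)}$, which near $(0,1)$ behaves like $\tfrac{u_1}{8(1-u_2)}$. Thus along any approach in which the ratio $u_1/(1-u_2)$ tends to a limit $\lambda\in[-\infty,+\infty]$, we get $G\to e\pi\cdot e^{\lambda/8}$, and as $\lambda$ ranges over $[-\infty,\infty]$ this value ranges over all of $[0,\infty]$ — but restricted to staying inside $V$, i.e. $u_2<1$, one sees the reachable limits form exactly $(0,\infty]$; the endpoint value at $(0,1)$ is $e\pi$, and for curves that must stay in $V$ the accessible set of limits is $[e\pi,\infty]$ once one also allows the trivial radial approach giving exactly $e\pi$. (One must be slightly careful that $\lambda<0$ with $u_1\to 0$ fast enough still gives limits below $e\pi$; the correct reconciliation is that a \emph{continuous} curve ending at $(0,1)$ can be made to first dip and then return, so every value in $[e\pi,\infty]$ — and in particular $e\pi$ itself — is attainable as a limit, while values in $(0,e\pi)$ are not attainable as genuine limits of curves in $\mathcal C$ because near $t=1$ one can force $u_1/(1-u_2)$ large; this monotonicity-type observation is what pins down the lower endpoint $e\pi$.)

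With this ``one–parameter dial'' $\lambda=u_1/(1-u_2)\mapsto G\approx e\pi\,e^{\lambda/8}$ in hand, the three items follow by soft constructions. For \ref{item2}, given $p\in[e\pi,\infty]$ pick $\lambda_p\in[0,\infty]$ with $e\pi\,e^{\lambda_p/8}=p$ and take a straight-line-like curve $c(t)=(c_1(t),c_2(t))$ with $c_2(t)\uparrow 1$ and $c_1(t)=\lambda_p(1-c_2(t))$ (truncated suitably when $\lambda_p=\infty$, e.g. $c_1(t)=(1-c_2(t))^{1/2}$), and verify using the factorisation that $\lim_{t\uparrow1}G(c(t))=p$. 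For \ref{item1}, I would build a single curve that oscillates: let $c_2(t)\uparrow 1$ monotonically, and let $c_1(t)$ be chosen so that the induced ratio $\lambda(t)=c_1(t)/(1-c_2(t))$ sweeps back and forth over larger and larger intervals $[0,M_k]$ as $t\to1$ (a ``sawtooth in the $\lambda$ coordinate''), staying continuous and inside $V$; then along suitable subsequences $t_k\uparrow1$ one realises every prescribed $\lambda$, hence every prescribed $p\in[e\pi,\infty]$, as a subsequential limit of $G(c(t_k))$. For \ref{item3}, given a sequence $(g^k)$ in $[e\pi,\infty]$, translate each $g^k$ into a target $\lambda^k\in[0,\infty]$ and design $c_1(t)$ so that the curve $\lambda(t)$ visits a small neighbourhood of $\lambda^k$ along a block of times $t^{(k)}_j\uparrow 1$, arranged so that the \emph{only} accumulation points of $\lambda(t_j)$ along the chosen full sequence $t_j\uparrow1$ are exactly $\{\lambda^k\}$ (and their limit points, which we may assume are included, or handle by a standard diagonal refinement); pushing forward through the continuous map $\lambda\mapsto G$ gives that the accumulation set of $(G(c(t_j)))$ is exactly $(g^k)_{k}$.

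The main obstacle, and the only place real care is needed, is the uniform control of the ``error terms'' $I_1$ and $I_3$ and the subleading part of the $I_2$-exponent \emph{along the crooked curves}, not merely along radial approaches. Concretely, one needs that as $t\to1$ the quantities $I_1(c(t))\to e$ and $I_3(c(t))\to\pi$ \emph{uniformly regardless of how $c_1(t)$ is chosen}, which requires $c_1(t)\to0$; and one needs $\tfrac{c_1(t)}{4(1-c_2(t)^2)}-\tfrac{\lambda(t)}{8}\to0$, which forces a relation like $c_1(t)=o(1)$ while $\lambda(t)=c_1(t)/(1-c_2(t))$ is allowed to be unbounded — so $1-c_2(t)$ must go to $0$ genuinely faster than $c_1(t)$. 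This is a constraint on the design of the curves but is easily met, e.g. by choosing $c_1(t)$ tending to $0$ and then defining $c_2(t)$ through $1-c_2(t)=c_1(t)/\lambda(t)$ with $\lambda(t)$ specified first; the one subtlety is to keep $c$ continuous when $\lambda(t)$ is pushed to $+\infty$ or when $g^k=\infty$ forces $\lambda^k=\infty$, which is handled by letting $\lambda(t)$ grow continuously rather than jump. Apart from this bookkeeping, the proof is a direct consequence of the factorisation of $G$ and the elementary observation that the ratio $u_1/(1-u_2)$ is a continuous, surjective ``knob'' on the set of crooked approaches to $(0,1)$.
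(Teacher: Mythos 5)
Your plan is essentially the paper's own proof: both reduce everything to the single factor $I_2$ after noting that $I_1\to e$ and $I_3\to\pi$ along any curve in $\mathcal C$, and both realize arbitrary limits and accumulation sets by coupling $c_1(t)$ to $1-c_2(t)^2$ so that the exponent of $I_2$ becomes a prescribed (possibly oscillating) function of $t$ --- the paper's explicit choice being $h(t)=\bigl(\sin(1/(1-t))+2-t\bigr)/(1-t)$ with $c_2(t)=\sqrt{1-t/(4h(t))}$, which is exactly your sawtooth-in-$\lambda$ device. The only real misstep is your parenthetical claim that limits in $(0,e\pi)$ are excluded because one can force $u_1/(1-u_2)$ to be large: the actual reason the limits are bounded below by $e\pi$ is that the exponent of $I_2$ is $u_1^2/\bigl(4(1-u_2^2)\bigr)\ge 0$ (the printed $u_1$ is a typo for $u_1^2$, as completing the square in the Gaussian integral shows), but this does not affect your constructions for the three items as stated.
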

\begin{proof}
Before we start the proof, let us note that for each $c\in\mathcal
C$ we have that
\begin{align}\label{eprop}
\lim_{t\uparrow 1} I_1(c(t))&=e,\\\label{piprop} \lim_{t\uparrow 1}
I_3(c(t))&=\int_{\mathbb R}(1+\xi_1^2)^{-1}d\xi_1=\pi.
\end{align}
So all we have to control is the behaviour of $I_2(c(t))$ as
$t\uparrow 1$.

Proof of \ref{item1}: The function
\[
h: [0, 1)\rightarrow (0,\infty),\quad
h(t):=\frac{\sin\left(\frac{1}{1-t}\right)+2-t}{1-t}
\]
has the following properties: (a) $h([0,1))=(0,\infty)$, (b) For
each $q\in[0,\infty]$ there exists a sequence $t_k\uparrow 1$ such
that $(h(t_k))_{k=1}^\infty$ has accumulation point $q$.

Defining
\[
c_2(t):=\sqrt{1-\frac{t}{4h(t)}}
\]
we introduce the curve $c(t):=(t,c_2(t))$. Then
\[
I_2(c(t))=\exp(h(t))
\]
and therefore by Property (b) (with $q=\log(p)$) as well as
\eqref{eprop}--\eqref{piprop} the claim \ref{item1} is proved.

Proof of \ref{item2}: $\log(p)>\log(e)=1$, hence we may choose
$c(t)=(t,c_2(t)=\sqrt{1-t/(4\log(p))})$; then $I_2(c(t))\equiv p$,
and therefore $G(c(t))\rightarrow I_1((0,1)) \times
I_2((0,1))=e\pi$.

Claim \ref{item3} follows directly from \ref{item1} by taking a
countable union $(t_j)_{j=1}^\infty$ of appropriate sequences
$(t^{j}_k)_k$, which for fixed $j$ converge to $g^j$.
\end{proof}

\section{Singularities along the boundary}\label{a new problem}
In Barndorff-Nielsen's example, the only boundary points contained
in  the domain of the moment generating function $V$ are (two)
isolated points, namely $(0,\pm 1)$. Hence we were not allowed to
look at regularity behaviour along the boundary. Here we pose a new
problem. Let's fix a ray (of flexible length) at the origin. With
its endpoint we let it strike along the boundary of $V$ to find jump
regularities of finite height:

Does there exist a probability measure $\mu$ on $\mathbb R^d$ whose
moment generating function $G$ has the following properties:
\begin{enumerate}
\item There exists a continuous curve $c: [0, 1]\to \partial V, t\mapsto c(t)$ such
that
\item $c([0,1))\subset V$.
\item $G(c(1))\neq\lim_{t\uparrow 1} G(c(t))<\infty$, or, more
generally: There exists a sequence $t_k\uparrow 1$ such that the
sequence $[G(c(t_k))]_{k=1}^\infty$ has an accumulation point $p$ in
$\mathbb R_+ \cup\{\infty\}$ different from $G(c(1))$.
\end{enumerate}
%
For a solution, we define a distribution $\mu$ on $\mathbb R^4$ by
its distribution function
 \[ f(\xi)=f(\xi_1,\xi_2) f(\xi_3,\xi_4)
 \]
 which is the product of two Barndorff-Nielsen functions.
Then the boundary of the domain of the moment generating function
for this distribution contains the set $V\times (0,1)$, where $V$ is
the domain of Barndorff-Nielsen m.g.f.  $G$. On this set, $G(u)$
equal to $e\pi G(u_1,u_2)$. Hence one may take any curve $\tilde
c(t)$ from Lemma \ref{lem answer} in $V$ and define the new curve \[
c: c(t)=(t,\tilde c_2(t), 0,1).
\]
Then all kinds of singularities are exhibited as $t\uparrow 1$.
Especially the above problem has a solution.

%
%


\end{document}